\newenvironment{cyr}{}{}
\renewcommand{\d}{\,\mathrm{d}}
\newcommand{\R}{\mathbb{R}}
\newcommand{\e}{\mathrm{e}}
\DeclareMathOperator{\UXP}{\mathbb{P}^{\mathrm{X}}}   % upper exchangeability probability
\DeclareMathOperator{\UXE}{\mathbb{E}^{\mathrm{X}}}   % upper exchangeability expectation
\DeclareMathOperator{\URP}{\mathbb{P}^{\mathrm{R}}}   % upper randomness probability
\DeclareMathOperator{\URE}{\mathbb{E}^{\mathrm{R}}}   % upper randomness expectation
\DeclareMathOperator{\mX}{\mathit{m}^{\mathrm{X}}}   % Martin-Lof's m
\DeclareMathOperator{\mR}{\mathit{m}^{\mathrm{R}}}   % its randomness analogue
\newcommand{\NNN}{\mathcal{N}}  % upper limit of the range of N
\theoremstyle{plain}
\newtheorem{theorem}{Theorem} % [section]
\newtheorem{lemma}[theorem]{Lemma}
\newtheorem{proposition}[theorem]{Proposition}
\theoremstyle{definition}
\theoremstyle{remark}
\newtheorem{remark}[theorem]{Remark}
\newcommand{\abstr}%
  {Randomness (in the sense of being generated in an IID fashion) and exchangeability
  are standard assumptions in nonparametric statistics and machine learning,
  and relations between them have been a popular topic of research.
  This short paper draws the reader's attention to the fact that,
  while for infinite sequences of observations the two assumptions are almost indistinguishable,
  the difference between them becomes very significant for finite sequences of a given length.}
\begin{document}
\title{Exchangeability and randomness for infinite and finite sequences}
\author{Vladimir Vovk}
\maketitle
\begin{abstract}
  \smallskip
  \abstr

  The version of this paper at \url{http://alrw.net} (Working Paper 45)
  is updated most often.
\end{abstract}

\section{Introduction}

In this paper we will discuss two important assumptions
about sequences of observations,
exchangeability and randomness,
using the word ``randomness'' in a somewhat old-fashioned sense
of the individual observations being independent and identically distributed
(following \cite[Chap.~7]{Lehmann:1975},
which used the standard terminology for its time,
and \cite{Vovk/etal:2022book}).

The relationship between exchangeability and randomness
is very different in the cases of infinite sequences and finite sequences.
In the former case, there is hardly any difference between the two assumptions.
But in the latter, the difference may be vast.
The study of this relationship has a long history, which will also be briefly reviewed.

We start in Sect.~\ref{sec:de-Finetti} with a discussion of de Finetti's theorem,
which was later greatly generalized by Hewitt and Savage and other people.
One implication of de Finetti's theorem is that, for a wide range of observation spaces,
there is no difference between the assumptions of exchangeability and randomness.

In Sect.~\ref{sec:finite} we move on to the case of finite sequences of observations of a given length.
The problem of relation between exchangeability and randomness in this case
was implicitly posed by Kolmogorov \cite{Kolmogorov:1968-local}
in his work on the frequentist foundations of probability.
In the context of the algorithmic theory of randomness,
he simply defined randomness as exchangeability for binary sequences
(in which case the difference between the two assumptions is much less significant,
as discussed in Sect.~\ref{subsec:finite-Z}).
Precise difference for a natural alternative definition of randomness
was explored in work done under his supervision \cite{Vovk:1986-proofs}.
In Sect.~\ref{subsec:batch}
we will see a simple non-binary example
where the difference between exchangeability and randomness
is very substantial.

According to Kolmogorov,
and it is difficult to argue with his point of view,
infinite sequences are empirically vacuous;
we never observe them in reality.
Therefore, he always insisted on either studying finite sequences
or at least keeping them in mind.
In fact, de Finetti's results can also be stated in terms of finite sequences,
provided we consider sequences of different lengths.
In this paper we will briefly discuss two modes of statistical hypothesis testing
involving finite sequences, online and batch;
the former corresponds to de Finetti's setting and the latter to Kolmogorov's.

In Sect.~\ref{sec:tight} we check that the gap between exchangeability and randomness
described in Sect.~\ref{sec:finite} is the widest possible in some sense.
In its first part, Sect.~\ref{subsec:infinite-Z},
we discuss a simpler and cleaner mathematical result
that holds for an infinite observation space.
Then Sect.~\ref{subsec:finite-Z} is devoted to more complicated
and perhaps less practically relevant results about finite observation spaces.
However, these results cover Kolmogorov's binary case.
And we will see that exchangeability and randomness are asymptotically close in this case,
at least according to a relaxed standard proposed and sometimes used by Kolmogorov.

This paper has little novelty,
and all of its mathematical results are very simple.
But the reader might find the scale of the difference
between exchangeability and randomness for infinite observation spaces
surprising.

\section{De Finetti's theorem}
\label{sec:de-Finetti}

Suppose we observe a data sequence $z_1,z_2,\dots\in\mathbf{Z}$
consisting of observations $z_n\in\mathbf{Z}$
that are elements of some measurable space $\mathbf{Z}$,
the \emph{observation space}.
The \emph{assumption of randomness} is standard in machine learning:
the data is coming from the product probability measure $Q^{\infty}$
for some $Q\in\mathfrak{P}(\mathbf{Z})$,
where $\mathfrak{P}(Z)$ stands for the measurable space
of all probability measures on a measurable space $Z$
(with the $\sigma$-algebra generated by the mappings $Q\mapsto Q(A)$,
$A$ being an event in $Z$).

\begin{remark}
  Effects of various distribution shifts have also been widely studied,
  but in this paper we concentrate on the basic IID case.
  In machine learning we often have observations $z=(x,y)$
  consisting of an object $x$ and its label $y$,
  but we do not insist on this.
\end{remark}

The more general \emph{assumption of exchangeability} is that
the data is coming from an \emph{exchangeable} probability measure
$R\in\mathfrak{P}(\mathbf{Z}^{\infty})$,
i.e., a probability measure that is invariant w.r.\ to swapping any pair of observations.
The topic of this section is the closeness of the two assumptions
for an infinite, or at least potentially infinite, data sequences
and assuming that $\mathbf{Z}$ is a \emph{Borel space},
i.e., a measurable space that is isomorphic to a Borel subset of $\R$.

By de Finetti's classical theorem
each exchangeable probability measure $R$ on $\R^{\infty}$
is a mixture of product distributions:
\begin{equation}\label{eq:representation}
  R = \int Q^{\infty} \mu(\d Q)
\end{equation}
for some $\mu\in\mathfrak{P}(\mathfrak{P}(\R))$.
This was established by de Finetti \cite[Chap.~4]{deFinetti:1937-local}.
Hewitt and Savage \cite[Theorem~7.3 and its discussion later in Sect.~7]{Hewitt/Savage:1955}
note that we can trivially replace $\R$ by any Borel space $\mathbf{Z}$
and point out that, to the best of their knowledge,
every measurable space known to have importance in applied science is Borel.
For example, every Polish space
(complete separable metric space with its Borel $\sigma$-algebra) is Borel.
On the other hand, there exists a separable metric space with its Borel $\sigma$-algebra
for which \eqref{eq:representation} can be violated \cite{Dubins/Freedman:1979}.
Let us assume in the rest of this paper that the observation space $\mathbf{Z}$ is Borel.

It is well known that de Finetti's theorem fails
if we simply replace $\infty$ by a finite $N$ in \eqref{eq:representation};
see, e.g., \cite[Sect.~4.7.1]{Bernardo/Smith:2000}
(and \cite[Sect.~1]{Diaconis:1977} for a further discussion
of the extent to which de Finetti's theorem can fail
for $N=2$ and $\mathbf{Z}=\{0,1\}$).

De Finetti's theorem plays an important role in the foundations of Bayesian statistics;
see, e.g., \cite[Chap.~4]{Bernardo/Smith:2000}.
But its implication in our context is that
the assumptions of exchangeability and randomness are equivalent:
if a testing procedure rejects one of these assumptions,
it rejects the other as well.
We will formalize this statement in three different ways.

The simplest way of statistical hypothesis testing is based on critical regions.
To test a composite null hypothesis $\mathcal{H}$
(a set of probability measures)
at a significance level $\epsilon\in(0,1)$ (such as $1\%$ or $5\%$),
we choose a \emph{critical region} $A$ at level $\epsilon$,
meaning an event satisfying $R(A)\le\epsilon$ for all $R\in\mathcal{H}$.
The hypothesis $\mathcal{H}$ is rejected if we observe $A$,
which must be chosen in advance.
Applying this to the assumption of exchangeability,
critical regions $A\subseteq\mathbf{Z}^{\infty}$ for testing exchangeability
at level $\epsilon$
are required to satisfy $\UXP(A)\le\epsilon$,
where the \emph{upper exchangeability probability} of $A$ is defined by
\begin{equation}\label{eq:UXP}
  \UXP(A)
  :=
  \sup_{R}
  R(A),
\end{equation}
$R$ ranging over the exchangeable probability measures on $\mathbf{Z}^{\infty}$.
Similarly,
critical regions $A\subseteq\mathbf{Z}^{\infty}$ for testing randomness
at level $\epsilon$
are required to satisfy $\URP(A)\le\epsilon$,
where the \emph{upper randomness probability} of $A$ is defined as
\begin{equation}\label{eq:URP}
  \URP(A)
  :=
  \sup_{Q\in\mathfrak{P}(\mathbf{Z})}
  Q^{\infty}(A).
\end{equation}
By de Finetti's theorem, $\UXP$ and $\URP$ coincide:
\begin{itemize}
\item
  since every product measure $Q^{\infty}$ is exchangeable,
  $\URP\le\UXP$;
\item
  on the other hand, since each exchangeable probability measure $R$
  has a representation~\eqref{eq:representation},
  we have, for each $\alpha>0$ and each event $A$ satisfying $\URP(A)\le\alpha$,
  \begin{equation}\label{eq:UXP_vs_URP}
    \UXP(A)
    =
    \sup_{R}
    R(A) % \\[-2mm] % "\\" in JOURNAL only
    \le
    \sup_{\mu,Q}
    \int Q^{\infty}(A) \mu(\d Q)
    \le
    \alpha;
  \end{equation}
  therefore, $\UXP\le\URP$.
\end{itemize}
We can see that there are exactly the same critical regions
at each significance level
under exchangeability and under randomness.

One manifestation of the coincidence of the critical
regions under exchangeability and randomness is that the two assumptions
will produce identical prediction sets
\begin{equation}\label{eq:Gamma}
  \Gamma(z_1,\dots,z_n)
  :=
  \bigl\{
    (z_{n+1},z_{n+2},\dots): % \\ % "\\" in JOURNAL only
    (z_1,\dots,z_n,z_{n+1},z_{n+2},\dots)\notin A
  \bigr\}
\end{equation}
for the future observations after observing $z_1,\dots,z_n$,
where $A$ is a critical region at some significance level $\epsilon$.
Under both exchangeability and randomness,
the coverage probability of the prediction set \eqref{eq:Gamma}
will be at least $1-\epsilon$.

There are two popular generalizations of critical regions,
p-variables and e-variables,
and both also produce identical results under exchangeability and randomness.
Let us first check this for p-variables.
According to the general definition
(see, e.g., \cite[Definition 1.2]{Ramdas/Wang:2025}),
an \emph{exchangeability p-variable} is a random variable
$P:\mathbf{Z}^{\infty}\to[0,1]$
such that, for all $\epsilon\in(0,1)$,
\begin{equation}\label{eq:X-p}
  \UXP(P\le\epsilon)
  \le
  \epsilon.
\end{equation}
Similarly,
a \emph{randomness p-variable} is a random variable
$P:\mathbf{Z}^{\infty}\to[0,1]$
such that, for all $\epsilon\in(0,1)$,
\begin{equation}\label{eq:R-p}
  \URP(P\le\epsilon)
  \le
  \epsilon.
\end{equation}
Since $\UXP=\URP$,
the classes of exchangeability and randomness p-variables coincide.

Again according to the general definition
\cite[Definition 1.2]{Ramdas/Wang:2025}
(see also \cite[(1)]{Grunwald/etal:2024}),
an \emph{exchangeability e-variable} is a measurable function
$F:\mathbf{Z}^{\infty}\to[0,\infty]$
such that $\UXE(F)\le1$, where
\begin{equation}\label{eq:UXE}
  \UXE(F)
  :=
  \sup_R
  \int F \d R,
\end{equation}
$R$ ranging over the exchangeable probability measures on $\mathbf{Z}^{\infty}$.
And a \emph{randomness e-variable} is a measurable function
$F:\mathbf{Z}^{\infty}\to[0,\infty]$
such that $\URE(F)\le1$, where
\begin{equation}\label{eq:URE}
  \URE(F)
  :=
  \sup_{Q\in\mathfrak{P}(\mathbf{Z})}
  \int F \d Q^{\infty}.
\end{equation}
Let us check that $\UXE=\URE$.
Since $\URE\le\UXE$ is obvious, we just need to check $\UXE\le\URE$.
Generalizing \eqref{eq:UXP_vs_URP},
we have, for each $\alpha>0$,
each measurable function $F:\mathbf{Z}^{\infty}\to[0,\infty]$
satisfying $\URE(F)\le\alpha$,
and each $\delta>0$,
\begin{equation}\label{eq:UXE_vs_URE}
  \UXE(F)
  \le
  \int F \d R + \delta % \\[-2mm] % "\\" in JOURNAL only
  =
  \int\!\!\int F \d Q^{\infty} \mu(\d Q) + \delta
  \le
  \alpha + \delta,
\end{equation}
and so indeed $\UXE\le\URE$.
The first inequality in \eqref{eq:UXE_vs_URE} holds
for some exchangeable probability measure $R$
and the second for some $\mu\in\mathfrak{P}(\mathfrak{P}(\mathbf{Z}))$
according to \eqref{eq:representation}.
As $\UXE=\URE$,
the classes of exchangeability and randomness e-variables coincide.

Whatever testing method out of the three we use,
we have the same options for rejecting exchangeability or randomness.
The empirical contents of the two assumptions may be said to coincide,
under our assumption of $\mathbf{Z}$ being a Borel space.

\section{Finite sequences of observations}
\label{sec:finite}

In the previous section we saw that, by de Finetti's theorem,
the difference between the assumptions of exchangeability and randomness disappears.
However, this is a statement about infinite sequences,
which we can never observe in reality.
Can we say something similar for finite sequences of observations?

\subsection{Batch setting}
\label{subsec:batch}

We start our discussion of finite sequences of observations
from the simplest setting in which we fix the number $N$ of observations
and only consider the sequences of length $N$, $\mathbf{Z}^N$.
We call this the \emph{batch setting}.
In this case a chasm between exchangeability and randomness opens up.
Consider the following very simple critical region $A$.
The $N$ observations are all in the set $\{1,\dots,N\}$
and are all different.
Under exchangeability, the event $A$ is perfectly possible:
its probability is 1 under some exchangeable probability measure.
Defining $\UXP$, $\URP$, $\UXE$, and $\URE$ as in the previous section
(see \eqref{eq:UXP}, \eqref{eq:URP}, \eqref{eq:UXE}, and \eqref{eq:URE})
but replacing $\mathbf{Z}^{\infty}$ with $\mathbf{Z}^N$
and $Q^{\infty}$ with $Q^N$,
we can see that $\UXP(A)=1$.
The maximum probability of $A$ under any product measure $Q^N$ is
\begin{equation}\label{eq:product}
  \frac{N}{N}
  \frac{N-1}{N}
  \dots
  \frac{1}{N}
  =
  \frac{N!}{N^N}
  \sim
  \sqrt{2\pi N}
  \e^{-N};
\end{equation}
therefore, it shrinks exponentially fast as $N$ grows.
Indeed, the maximum of $Q^N(A)$ is achieved
for the $Q$ concentrated on $\{1,\dots,N\}$
and uniformly distributed on this set.
Applying Stirling's formula in the form of \cite{Robbins:1955}
gives
\begin{equation}\label{eq:Stirling}
  \URP(A) < 3\sqrt{N}\e^{-N}
  <
  1 = \UXP(A)
\end{equation}
(the second inequality ``$<$'' assumes $N>1$).
For example, suppose we are interested in significance level $\epsilon:=10^{-k}$ for $k\ge2$
(such as $k=2$ for high statistical significance).
Solving $3\sqrt{N}\e^{-N}\le10^{-k}$,
we obtain
\[
  \URP(A) < 10^{-k}
  <
  1 = \UXP(A)
\]
provided $N\ge3k+1$.
For example, an outcome that is perfectly possible under exchangeability ($\UXP(A)=1$)
becomes highly statistically significant under randomness for $N=7$.
The much stricter significance level of ``5 sigma'',
approximately $1/(3\times10^6)$,
used for announcing discoveries in particle physics \cite{ATLAS:2012}
is met starting from $N=22$.

\begin{remark}
  A convenient, albeit asymptotically much cruder,
  version of the inequality~\eqref{eq:Stirling} is
  \begin{equation*}
    \URP(A) \le 2^{-N+1}
    \le
    1 = \UXP(A).
  \end{equation*}
\end{remark}

\begin{remark}
  The difference between exchangeability and randomness
  is also manifested by the fact that exchangeability
  is easy to achieve in practice:
  we can just permute randomly our data sequence.
  However, the resulting sequence may be very far from being IID.
\end{remark}

Similarly to~\eqref{eq:Gamma},
we can invert exchangeability and randomness critical regions
in the batch mode
and use them for one-step-ahead prediction
(whereas prediction in~\eqref{eq:Gamma} was infinitely many steps ahead).
Given an observed sequence $z_1,\dots,z_n$,
we output
\begin{equation*}
  \Gamma(z_1,\dots,z_n)
  :=
  \left\{
    z_{n+1}:
    (z_1,z_2,\dots,z_{n+1})\notin A
  \right\}
\end{equation*}
as our prediction set for the next observation,
where $A$ is a critical region in $\mathbf{Z}^N$ with $N:=n+1$;
this ensures a coverage probability of at least $1-\epsilon$,
where $\epsilon$ is the significance level used in $A$.
Under randomness, we can produce non-vacuous
(i.e., different from $\mathbf{Z}$) prediction sets
at significance level $3\sqrt{N}\e^{-N}$
even in situations where no non-vacuous prediction sets
are possible under exchangeability
at any non-trivial significance level $\epsilon$
(i.e., at any $\epsilon<1$).
It is instructive to compare this with conformal prediction,
where non-vacuous prediction sets are only possible
at much larger significance levels of at least $1/N$
\cite[Sect.~11.4.4]{Vovk/etal:2022book}.

\subsection{Kolmogorov's and Martin-L\"of's work on Bernoulli sequences}
\label{subsec: Bernoulli}

The assumptions of randomness and exchangeability,
in different guises,
have also played important roles in the foundations of frequentist statistics.
The standard measure-theoretic foundations of probability were put forward
in Kolmogorov's \emph{Grundbegriffe} \cite{Kolmogorov:1933},
but Kolmogorov did not believe that they were sufficient
for applications of probability.
As he pointed out in the \emph{Grundbegriffe} \cite[footnote 4]{Kolmogorov:1933},
in his frequentist analysis of the applications of probability
he was following Richard von Mises.
However, a big difference between Kolmogorov's and von Mises's approaches
was that von Mises's was based on infinite sequences,
whereas Kolmogorov believed that infinite sequences,
being empirically non-existent,
had no place in discussions of real-world applications of probability.
Interestingly, because of this he even objected
against publication in \emph{Russian Mathematical Surveys}
(a journal that he edited at the time)
of a planned paper about infinite random sequences
by his student Uspensky and close collaborators Shen and Semenov;
see Kolmogorov's letter to Uspensky of June 1983
cited in \cite[note 14]{Semenov/etal:2024-full}.

For a long time Kolmogorov believed that no frequentist concept of probability
can be developed for finite sequences \cite[Sect.~1]{Kolmogorov:1963},
but in 1963 he published his first attempt in this direction
\cite[Sect.~2]{Kolmogorov:1963}.
The attempt, however, was ``incomplete''
(as he characterizes it in \cite[Sect.~4]{Kolmogorov:1965-local}),
and he greatly improved on it in 1968 \cite[Sect.~2]{Kolmogorov:1968-local}
(this paper is based on his 1967 talk).
It appears that the details of Kolmogorov's improved approach
first appeared in print in Martin-L\"of's 1966 paper \cite[Sect.~5]{Martin-Lof:1966}.

Both Kolmogorov and Martin-L\"of consider binary sequences
and define what they call Bernoulli sequences,
i.e., sequences that can be plausibly obtained as result of IID observations.
While their informal explanations clearly show
that they are interested in the assumption of randomness,\footnote{%
  This is a relevant quote from Kolmogorov \cite[Sect.~2]{Kolmogorov:1968-local}:
  ``let us consider how we imagine a sequence of zeros and ones
  appearing as the result of independent trials
  with probability $p$ of obtaining a one at each trial.''}
their formal definitions are about the assumption of exchangeability.
Let me give essentially Martin-L\"of's definition;
I will use slightly different terminology,
but my definition will be equivalent to Martin-L\"of's.
This definition will rely on some basic notions of the theory of algorithms,
but it will not be used outside this subsection,
and the reader can skip the rest of the subsection
without interrupting the flow of ideas.

Let us define exchangeability p-variables and randomness p-variables
as in the previous section, by \eqref{eq:X-p} and \eqref{eq:R-p},
but replacing $\mathbf{Z}^{\infty}$ with $\mathbf{Z}^N$ for a finite $N$.
We consider families $P_N$, $N\in\{1,2,\dots\}$, of exchangeability p-variables
such that $P_N(x)$ is upper semicomputable
as function of $N$ and $x\in\mathbf{Z}^N$
(where the upper semicomputability means that,
for a computable function $f$ taking rational values,
$P_N(x)=\inf_k f(N,x,k)$, $k$ ranging over the natural numbers).
There exists a smallest,
to within a constant factor,
function $(N,x)\mapsto P_N(x)$ of this kind,
and Martin-L\"of defines
\[
  m(x)
  :=
  -\log_2 P_N(x)
\]
for all binary sequences $x\in\{0,1\}^*$,
where $N$ is the length of $x$ and $\log_b$ is base-$b$ logarithm.
This definition is slightly arbitrary,
since $m(x)$ is only defined to within an additive constant,
so additive terms of $O(1)$ are typically ignored
in the algorithmic theory of randomness.

Another equivalent definition of $m$ is given
by Kolmogorov in \cite[Sect.~2]{Kolmogorov:1968-local}
in terms of his notion of complexity
(and Martin-L\"of proves the equivalence in \cite[Sect.~5]{Martin-Lof:1966}).
A binary sequence $x$ is called a \emph{Bernoulli sequence}
if $m(x)$ is small;
this is an informal notion, but we can prove mathematical results
about the function $m$ (albeit only with the $O(1)$ accuracy).

Replacing the assumption of exchangeability by that of randomness,
we get a function that we denote by $\mR$ instead of $m$.
(A natural notation for $m$ in view of our notations $\UXP$ and $\UXE$
would have been $\mX$,
but $m$ is what Martin-L\"of used in \cite[Sect.~5]{Martin-Lof:1966}.)

These definitions can be adapted verbatim to the case
where the observation space $\mathbf{Z}$ is $\{1,2,\dots\}$
rather than $\{0,1\}$.
In this case the example in Sect.~\ref{subsec:batch} demonstrating \eqref{eq:Stirling}
shows that there are sequences $x_N\in\mathbf{Z}^N$, $N=1,2,\dots$,
such that
\begin{equation*}
  m(x_N) = O(1)
  <
  N\log_2\e - \frac12\log_2N + O(1)
  =
  \mR(x_N),
\end{equation*}
the inequality holding from some $N$ on;
this is stated without proof in \cite[Theorem~4]{Vovk/etal:1999-local}.
We can see that the difference between $m$ and $\mR$ is very substantial.

\subsection{Versions of de Finetti's theorem for finite sequences of observations}

De Finetti's theorem is about infinite sequences of observations,
similarly to von Mises's frequentist story criticised by Kolmogorov.
Does it mean that de Finetti's theorem is empirically irrelevant?
Not at all.
Whereas von Mises's story may be hopelessly stuck at infinity
(to use Shafer's expression \cite{Shafer:1993}),
de Finetti's theorem may be applied to finite sequences,
albeit not in the batch setting of Sect.~\ref{subsec:batch}.
A popular alternative to the batch setting is the \emph{online setting},
where we do not fix the number of observations in advance
and process them sequentially,
and then the sequence of observations becomes potentially infinite;
therefore, de Finetti's theorem becomes applicable
if we assume exchangeability for all those potential observations.
For example, in Sect.~6 of \cite{Martin-Lof:1966}
Martin-L\"of develops a way of testing exchangeability
for all finite prefixes of a potentially infinite sequence of observations,
which means that he is indeed testing randomness of the overall sequence.

An instructive finite form of de Finetti's theorem
was derived by Diaconis and
Freedman \cite{Diaconis/Freedman:1980finite}
(with an early version given already at the very end of \cite{Hewitt/Savage:1955}).
Since the assumptions of exchangeability and randomness are so different
for a fixed length $N$,
we have to consider different lengths for imposing exchangeability
and for claiming randomness.
In the abstract of \cite{Diaconis:1977},
which states the Diaconis--Freedman result for binary sequences, Diaconis
summarizes this result thus:
``an exchangeable sequence of length $r$
which can be extended to an exchangeable sequence of length $k$
is almost a mixture of independent experiments,
the error going to zero like $1/k$''.
It is essential that $k$ and $r$ should be different here,
ideally $k\gg r$.
There have been several recent information-theoretic developments of this idea;
see \cite[Corollary~1]{Johnson/etal:2025} for a particularly strong result.

Another popular finite version of de Finetti's theorem
appears in Dellacherie and Meyer \cite[Chap.~5, 52]{Dellacherie/Meyer:1982},
who credit this result to P.~Cartier;
see Kerns and Sz\'ekely \cite{Szekely/Kerns:2006} for a fuller exposition.
Theorem~1.1 in \cite{Szekely/Kerns:2006} says
that the representation \eqref{eq:representation}
holds in the batch setting for any exchangeable probability measure $R$ on $\mathbf{Z}^N$
without any restrictions on the measurable space $\mathbf{Z}$
if we allow $\mu$ to be a signed measure of bounded variation.
This results appears to be a mathematical curiosity
that does not have any implications for statistical hypothesis testing.

Finally, ``de Finetti's theorem'' is sometimes used in a much wider sense
covering representations of exchangeable probability measures
as mixtures of probability measures different from product measures,
as in \cite[Theorem 1]{Diaconis:1977}.
We do not discuss such results as our main interest
is relations between exchangeability and randomness.

\section{Tight inequalities}
\label{sec:tight}

We first consider the simpler case of an infinite observation space $\mathbf{Z}$
and then move on to the slightly messier case of a finite $\mathbf{Z}$.
We always assume that all singletons in $\mathbf{Z}$ are measurable;
among other things, this will ensure that the effective size of a finite $\mathbf{Z}$
is equal to its cardinality $\left|\mathbf{Z}\right|$.

\subsection{The case of infinite $\mathbf{Z}$}
\label{subsec:infinite-Z}

In this subsection we will check that the example given in the previous section
(see \eqref{eq:product} and \eqref{eq:Stirling})
is the most extreme when the observation space $\mathbf{Z}$ is infinite.
Namely, we will see that,
for any $\mathbf{Z}$ (finite or infinite) and any event $A$ in $\mathbf{Z}^N$,
\begin{equation}\label{eq:bound}
  \UXP(A) \le \frac{N^N}{N!} \URP(A);
\end{equation}
the example demonstrates that the equality here is attained.
The length $N$ is fixed throughout this section.

The equation \eqref{eq:bound} can be strengthened to the following proposition.

\begin{proposition}\label{prop:infinite-Z}
  For any random variable $F:\mathbf{Z}^N\to[0,\infty)$,
  \begin{equation}\label{eq:to-prove}
    \UXE(F) \le \frac{N^N}{N!} \URE(F).
  \end{equation}
  Suppose $\left|\mathbf{Z}\right|\ge N$.
  The bound is tight and attained on a non-zero indicator function.
  Moreover,
  there exists an event $A$ in $\mathbf{Z}^N$
  such that
  \[
    \URP(A)
    =
    \frac{N!}{N^N}
    \le
    1 = \UXP(A).
  \]
\end{proposition}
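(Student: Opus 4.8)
The plan is to prove the inequality \eqref{eq:to-prove} directly, and then to exhibit the extremal example separately. For the inequality, the key observation is that exchangeable and product measures are linked through symmetrization. Given any $z=(z_1,\dots,z_N)\in\mathbf{Z}^N$, let $\mathrm{Sym}(z)$ denote the $N!$ permutations of its coordinates (with multiplicity). I would first establish a pointwise comparison between an exchangeable measure $R$ and a suitable product measure built from $R$. Concretely, consider the empirical-type construction: for the extremal step it is cleanest to compare $R$ against the product measure $Q^N$ where $Q$ is the first marginal of $R$. The crucial combinatorial fact is that for a product measure $Q^N$, the mass of the orbit of a point with all-distinct coordinates is spread over $N!$ permutations each carrying equal mass, whereas the factor $N^N/N!$ is exactly the reciprocal of the smallest possible ``per-configuration'' product weight $N!/N^N$. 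So the first step is to make precise the inequality $\int F\,\dd R \le \frac{N^N}{N!}\sup_Q\int F\,\dd Q^N$ for every exchangeable $R$.

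The cleanest route I would take uses the finite de Finetti representation. Since we are on $\mathbf{Z}^N$ and do not want to assume Borel structure, I would instead argue more elementarily via extreme points: every exchangeable $R$ on $\mathbf{Z}^N$ is a mixture of measures of the form ``uniform over the permutations of a fixed multiset $\{z_1,\dots,z_N\}$''. It therefore suffices, by taking suprema inside the mixture exactly as in \eqref{eq:UXE_vs_URE}, to prove \eqref{eq:to-prove} when $R$ is such a single-orbit uniform measure $R_{\mathbf{m}}$ attached to a multiset $\mathbf{m}$. For such $R_{\mathbf{m}}$ I would compare it against the product measure $Q^N$ with $Q$ the empirical distribution of $\mathbf{m}$; a direct count shows $Q^N$ assigns to each permutation of $\mathbf{m}$ at least mass $N!/N^N$ times the $R_{\mathbf{m}}$-mass (the worst case being all coordinates distinct, giving exactly $N!/N^N$), which yields $\int F\,\dd R_{\mathbf{m}} \le \frac{N^N}{N!}\int F\,\dd Q^N \le \frac{N^N}{N!}\URE(F)$. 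Taking the supremum over $R$ completes the inequality.

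For tightness under $\left|\mathbf{Z}\right|\ge N$, I would simply reuse the example of Sect.~\ref{subsec:batch}: pick $N$ distinct points $a_1,\dots,a_N\in\mathbf{Z}$ and let $A$ be the set of all sequences in $\mathbf{Z}^N$ whose coordinates are exactly $\{a_1,\dots,a_N\}$ in some order. The exchangeable measure concentrated uniformly on the $N!$ orderings of $(a_1,\dots,a_N)$ gives $\UXP(A)=1$, and the computation \eqref{eq:product} shows the best product measure is the uniform $Q$ on $\{a_1,\dots,a_N\}$, giving $\URP(A)=N!/N^N$. Taking $F=\mathbf 1_A$ then makes \eqref{eq:to-prove} an equality, which simultaneously establishes the ``moreover'' statement about the event $A$ and shows the bound is attained on a non-zero indicator.

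The main obstacle I anticipate is the reduction to single-orbit extreme points without invoking the Borel hypothesis: I must justify that an arbitrary exchangeable $R$ on $\mathbf{Z}^N$ is a genuine mixture of the uniform orbit measures $R_{\mathbf{m}}$, i.e.\ that conditioning on the unordered multiset of observations and then symmetrizing recovers $R$. This is really a statement about disintegrating $R$ over the map sending a sequence to its order statistics; it holds because singletons are measurable (as assumed at the start of Sect.~\ref{sec:tight}) and $R$ is permutation-invariant, but it needs a careful measurability argument rather than a bare appeal to \eqref{eq:representation}. An alternative that sidesteps disintegration entirely is to prove the pointwise bound $R(B)\le\frac{N^N}{N!}\,\UXP\text{-free product bound}$ by a direct symmetrization of $F$ under the symmetric group and a coordinatewise Fubini argument; if the disintegration turns out to be delicate I would fall back on this symmetrization approach, which only uses invariance of $R$ and never constructs the mixing measure $\mu$.
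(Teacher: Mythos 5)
Your proposal is correct and follows essentially the same route as the paper: reduce to orbit-uniform exchangeable measures, compare each against the product of the empirical distribution to get the worst-case factor $N!/N^N$ (all coordinates distinct), and reuse the example of Sect.~\ref{subsec:batch} for tightness. The reduction you worry about does not actually require any disintegration or construction of a mixing measure: the paper gets it from the identity $\int F\d R=\int\bar F\d R\le\sup_{z}\bar F(z)$ for exchangeable $R$, where $\bar F$ is the symmetrization of $F$ over $S_N$ --- which is exactly the fallback argument you describe in your last paragraph, so you should promote it to the main proof.
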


\begin{proof}
  To prove \eqref{eq:to-prove},
  first notice that the left-hand side
  is the supremum of the averages of $F$ over all orbits,
  where an \emph{orbit} is defined to be the set of all permutations
  (not necessarily distinct)
  of a sequence in $\mathbf{Z}^N$;
  as a formula,
  \[
    \UXE(F)
    =
    \sup_{z_1,\dots,z_N}
    \bar F(z_1,\dots,z_N),
  \]
  where
  \[
    \bar F(z_1,\dots,z_N)
    :=
    \frac{1}{N!}
    \sum_{\pi\in S_N}
    F(z_{\pi(1)},\dots,z_{\pi(N)})
  \]
  and $S_N$ stands for the symmetric group of all permutations of $\{1,\dots,N\}$
  (this follows from, e.g., \cite[Lemma~A.3]{Vovk/etal:2022book}).
  Therefore, it suffices to consider only $F$ that are non-zero on one orbit only.
  Let $F$ be non-zero on the orbit generated
  by a sequence $z_1,\dots,z_N$ in $\mathbf{Z}^N$
  containing $K$ distinct elements of $\mathbf{Z}$
  with multiplicities $n_1,\dots,n_K$ (so that $n_1+\dots+n_K=N$).
  The largest product probability $Q^N$ of an element of this orbit is
  \[
    \prod_{k=1}^K
    \left(
      \frac{n_k}{N}
    \right)^{n_k},
  \]
  and, therefore,
  \begin{multline}\label{eq:core}
    \URE(F)
    =
    \bar F(z_1,\dots,z_N)
    \frac{N!}{n_1!\dots n_K!}
    \prod_{k=1}^K
    \left(
      \frac{n_k}{N}
    \right)^{n_k}\\
    =
    \UXE(F)
    \frac{N!}{n_1!\dots n_K!}
    \prod_{k=1}^K
    \left(
      \frac{n_k}{N}
    \right)^{n_k} % \\
    \ge
    \UXE(F)
    \frac{N!}{N^N},
  \end{multline}
  where the inequality follows from $n^n/n!\ge1$.
  This completes the proof of \eqref{eq:to-prove}.

  The example in the previous section demonstrates
  that the bound is tight when $\left|\mathbf{Z}\right|\ge N$,
  since we may assume $\{1,\dots,N\}\subseteq\mathbf{Z}$
  without further loss of generality.
\end{proof}

\subsection{Smaller observation spaces $\mathbf{Z}$}
\label{subsec:finite-Z}

The case $\left|\mathbf{Z}\right|=\infty$,
or at least $\left|\mathbf{Z}\right|\gg1$,
is probably most relevant in practice
(e.g., even in the case of classification problems,
the objects to be classified are typically complex).
In this section, however, we consider the case of a finite $\mathbf{Z}$
and are mostly interested in a small $\left|\mathbf{Z}\right|$.
This will allow us to cover, e.g.,
Kolmogorov's and Martin-L\"of's case of binary sequences.

The inequality \eqref{eq:to-prove} in Proposition~\ref{prop:infinite-Z}
is tight in the case of an infinite $\mathbf{Z}$,
which we relaxed in the statement of the proposition
to what is actually used in the proof, $\left|\mathbf{Z}\right|\ge N$.
The following proposition extends \eqref{eq:to-prove} to smaller $\mathbf{Z}$.

\begin{proposition}\label{prop:finite-Z}
  Suppose $K:=\left|\mathbf{Z}\right|<N$.
  For any random variable $F:\mathbf{Z}^N\to[0,\infty)$,
  we have $\UXE(F) \le C\URE(F)$,
  where
  \begin{equation}\label{eq:C}
    C
    :=
    \frac{N^N}{N!}
    \prod_{k=1}^K
    \frac{n_k!}{n_k^{n_k}}
  \end{equation}
  and $n_k$ is any balanced split of $N$ into $K$ parts:
  \begin{equation}\label{eq:split}
    n_k
    \in
    \left\{
      \lfloor N/K\rfloor,
      \lceil N/K\rceil
    \right\}
    \text{ such that }
    \sum_{k=1}^K n_k = N.
  \end{equation}
  The factor $C$ is tight;
  moreover,
  there exists an event $A\subseteq\mathbf{Z}^N$
  such that
  \begin{equation}\label{eq:tight}
    \URP(A)
    =
    1/C
    \le
    1 = \UXP(A).
  \end{equation}
\end{proposition}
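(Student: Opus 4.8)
The plan is to follow the same orbit-decomposition strategy used in the proof of Proposition~\ref{prop:infinite-Z}, but to track carefully how the multiplicities are constrained when $\mathbf{Z}$ has only $K < N$ points. Exactly as before, I would begin from the identity $\UXE(F) = \sup_{z_1,\dots,z_N} \bar F(z_1,\dots,z_N)$, which reduces everything to functions $F$ supported on a single orbit. For such an $F$, supported on the orbit of a sequence with $K'$ distinct values occurring with multiplicities $m_1,\dots,m_{K'}$ (where necessarily $K' \le K$ and $\sum_j m_j = N$), the computation \eqref{eq:core} gives verbatim
\[
  \URE(F) = \UXE(F)\,\frac{N!}{m_1!\dots m_{K'}!}\prod_{j=1}^{K'}\left(\frac{m_j}{N}\right)^{m_j}.
\]
Thus proving $\UXE(F) \le C\,\URE(F)$ amounts to showing that the quantity $\frac{N!}{m_1!\dots m_{K'}!}\prod_j (m_j/N)^{m_j}$, i.e. the largest product-measure probability of the orbit, is at least $1/C$ for every admissible multiplicity vector. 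Equivalently, writing $g(m_1,\dots,m_{K'}) := \prod_j \frac{m_j^{m_j}}{m_j!}$, the bound becomes $\frac{N!}{N^N}\,g(m_1,\dots,m_{K'}) \ge 1/C = \frac{N!}{N^N}\prod_{k=1}^K \frac{n_k^{n_k}}{n_k!}$, so it suffices to establish that $g$ is \emph{minimized} over all compositions of $N$ into at most $K$ positive parts precisely at the balanced split \eqref{eq:split}.

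The main obstacle is therefore this discrete optimization: minimizing $\prod_j \frac{m_j^{m_j}}{m_j!}$ subject to $\sum_j m_j = N$ and at most $K$ parts. Two observations drive the argument. First, since each factor satisfies $a^a/a! \ge 1$ with strict inequality for $a \ge 2$ (and equality at $a=1$), one should check that using \emph{fewer} than $K$ parts never helps --- splitting a part of size $m \ge 2$ into two smaller parts lowers the product --- so the minimizer uses all $K$ parts. Second, with exactly $K$ parts, I would show the minimizer is as balanced as possible by a standard exchange/smoothing argument: if some $m_i \ge m_j + 2$, replacing $(m_i, m_j)$ by $(m_i - 1, m_j + 1)$ strictly decreases the product. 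This reduces to verifying the single inequality $\frac{m_i^{m_i}}{m_i!}\cdot\frac{m_j^{m_j}}{m_j!} > \frac{(m_i-1)^{m_i-1}}{(m_i-1)!}\cdot\frac{(m_j+1)^{m_j+1}}{(m_j+1)!}$ whenever $m_i \ge m_j + 2$, which after cancelling factorials becomes a clean comparison of $m_i^{m_i}(m_j+1)$ against $(m_i-1)\cdots$ type factors and follows from the strict log-convexity of $a \mapsto a\log a$ (equivalently, the monotonicity of discrete differences of $a^a/a!$). Once this smoothing step is in place, the unique balanced configuration \eqref{eq:split} is forced, and $C$ as defined in \eqref{eq:C} is exactly $N^N/N!$ times the reciprocal of the minimal value of $g$.

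Finally, for tightness and the construction of the extremal event, I would take the orbit realizing the balanced split: let $A \subseteq \mathbf{Z}^N$ be the orbit of a sequence in which the $K$ elements of $\mathbf{Z}$ appear with multiplicities $n_1,\dots,n_K$ from \eqref{eq:split}. This $A$ is permutation-invariant, and it is perfectly possible under exchangeability (a measure concentrated on $A$ and uniform over it is exchangeable), giving $\UXP(A)=1$. Taking $F = \mathbf{1}_A$ in the orbit computation, the largest product probability of $A$ is achieved by the $Q$ placing mass $n_k/N$ on the $k$-th value, which yields $\URP(A) = \frac{N!}{n_1!\dots n_K!}\prod_k (n_k/N)^{n_k} = 1/C$. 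This simultaneously shows the factor $C$ is attained on the indicator $\mathbf{1}_A$ and establishes \eqref{eq:tight}, completing the proof.
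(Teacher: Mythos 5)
Your proposal is correct and follows essentially the same route as the paper: reduce to single-orbit $F$ via \eqref{eq:core}, minimize $\prod_k n_k^{n_k}/n_k!$ over compositions of $N$ into at most $K$ parts by an exchange/smoothing argument, and take the balanced orbit of \eqref{eq:split} for tightness (the paper merges your two optimization steps into one by allowing $n_k=0$ in the smoothing). One caution on the key step: your exchange inequality does \emph{not} follow from convexity of $a\mapsto a\log a$ alone, since the factorial correction works against you ($\log(a!)$ is itself convex); what is actually needed is convexity of the combined function $n\log n-\log(n!)$, which the paper isolates as Lemma~\ref{lem:convexity} and proves by checking that its discrete derivative $n\log(1+1/n)$ is increasing --- your parenthetical remark about monotone discrete differences is the correct statement, so that should be the argument you actually carry out.
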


\begin{proof}
  Without loss of generality,
  we set $\mathbf{Z}:=\{1,\dots,K\}$ and proceed
  as in the proof of Proposition~\ref{prop:infinite-Z}.
  Let $n_k$ be the number of times that $k\in\mathbf{Z}$
  occurs in $z_1,\dots,z_N$
  (so that now $n_k=0$ is possible,
  in which case $n_k^{n_k}:=1$ and $n_k!=1$).
  Now instead of the inequality ``$\ge$'' in \eqref{eq:core}
  we use the convexity of the function
  $\log(n^n/n!) = n\log n - \log(n!)$ in $n$
  (see Lemma~\ref{lem:convexity} below).
  If the split $n_k$, $k=1,\dots,K$, of $N$ is not balanced,
  we can find $n_{k_1}$ and $n_{k_2}$ such that $n_{k_1}-n_{k_2}\ge2$.
  By the convexity, the penultimate expression in the chain \eqref{eq:core}
  cannot increase if we move $n_{k_1}$ and $n_{k_2}$ towards each other
  by redefining $n_{k_1}:=n_{k_1}-1$ and $n_{k_2}:=n_{k_2}+1$.
  Repeating this operation we arrive at a balanced split.

  An event $A$ satisfying \eqref{eq:tight} can be chosen as the orbit
  with the counts $n_k$ for $k\in\mathbf{Z}$ given by \eqref{eq:split}.
\end{proof}

The following lemma was used in the proof of Proposition~\ref{prop:finite-Z}.

\begin{lemma}\label{lem:convexity}
  The function $n\log n - \log(n!)$ of $n\in\{0,1,\dots\}$
  is convex (and even strictly convex).
\end{lemma}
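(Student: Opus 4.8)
The plan is to reduce convexity of the integer sequence $f(n) := n\log n - \log(n!)$ to monotonicity of its consecutive differences. Recall that a function on $\{0,1,\dots\}$ is convex if and only if its sequence of first differences $\Delta f(n) := f(n+1)-f(n)$ is non-decreasing, and strictly convex if and only if this sequence is strictly increasing. So the entire problem reduces to understanding $\Delta f$.

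First I would compute $\Delta f$ in closed form. The key simplification is that the factorial telescopes: $\log((n+1)!)-\log(n!)=\log(n+1)$. Hence, for $n\ge1$,
\[
  \Delta f(n)
  =
  (n+1)\log(n+1) - n\log n - \log(n+1)
  =
  n\log\frac{n+1}{n}
  =
  n\log\left(1+\frac1n\right),
\]
whereas $\Delta f(0) = f(1)-f(0) = 0$ under the usual conventions $0\log0:=0$ and $0!:=1$.

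Next I would show that the function $g(x):=x\log(1+1/x)$ is strictly increasing on $(0,\infty)$; since $\Delta f(n)=g(n)$ for $n\ge1$, this yields strict monotonicity of the differences from $n=1$ onward. Differentiating gives $g'(x)=\log(1+1/x)-\frac{1}{x+1}$, so after the substitution $t:=1/x$ the inequality $g'(x)>0$ becomes the standard estimate $\log(1+t)>\frac{t}{1+t}$ for $t>0$. The latter follows because the difference $h(t):=\log(1+t)-\frac{t}{1+t}$ vanishes at $t=0$ and has positive derivative $h'(t)=\frac{t}{(1+t)^2}$ for $t>0$.

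Finally I would assemble the pieces. The strict monotonicity of $g$ gives $\Delta f(1)<\Delta f(2)<\cdots$, and the single remaining comparison $\Delta f(0)=0<\log2=\Delta f(1)$ extends strict monotonicity to the full sequence $\Delta f(0),\Delta f(1),\dots$, which is exactly strict convexity of $f$. I do not expect any serious obstacle; the only mild subtlety is the bookkeeping at $n=0$, where the closed form $n\log(1+1/n)$ degenerates and must be replaced by the direct evaluation $\Delta f(0)=0$.
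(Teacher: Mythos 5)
Your proof is correct and follows essentially the same route as the paper's: compute the first difference, which telescopes to $n\log(1+1/n)$, extend to positive reals, and show the derivative $\log(1+1/x)-\tfrac{1}{x+1}$ is positive via the standard bound $\log(1+u)>\tfrac{u}{1+u}$. The only difference is that you handle the degenerate case $n=0$ explicitly (checking $\Delta f(0)=0<\log 2=\Delta f(1)$), a small piece of bookkeeping the paper leaves implicit.
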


\begin{proof}
  We are required to prove that the function
  \begin{multline*}
    f(n)
    :=
    \left(
      (n+1)\log(n+1) - \log((n+1)!)
    \right)\\
    -
    \left(
      n\log n - \log(n!)
    \right)
    =
    n
    \log
    \left(
      1 + 1/n
    \right)
  \end{multline*}
  is strictly increasing.
  Extending it to the nonnegative reals, 
  \(
    f(x)
    :=
    x\log(1 + 1/x)
  \),
  interpreting $\log$ as $\ln$, and differentiating gives
  \[
    f'(x)
    =
    \log
    \left(
      1+\frac{1}{x}
    \right)
    -
    \frac{1}{x+1}.
  \]
  By the strict concavity of \(\log\),
  we have \(\log(1+u)>\frac{u}{1+u}\) for \(u>0\),
  and substituting \(u:=1/x\) gives $f'(x)>0$ for all \(x>0\).
\end{proof}

\begin{figure}[bt]
  \begin{center}
    \includegraphics[width=0.98\columnwidth]{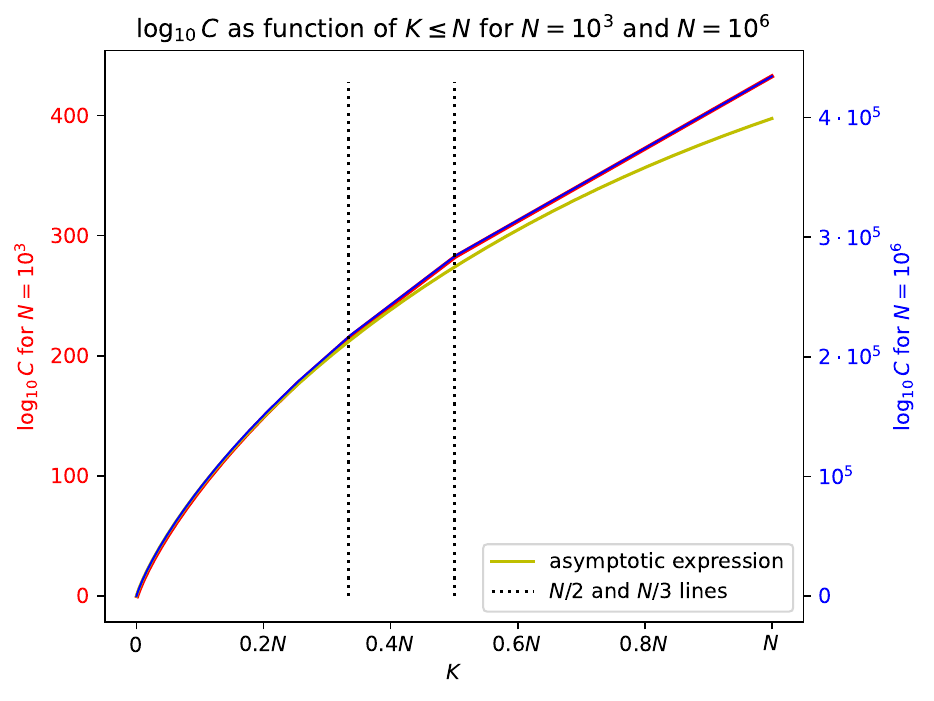}
  \end{center}
  \caption{The graphs of $\log_{10}C$ for $N=10^3$
    (thick red line, with some values given on the left)
    and $N=10^6$ (thin blue line, with values on the right).
    The yellow line represents the approximation
    described in Remark~\ref{rem:limit}.}
  \label{fig:logC_fixedN}
\end{figure}

Figure~\ref{fig:logC_fixedN} shows $\log_{10} C$
as a function of $K\in\{1,\dots,N\}$ for two values of $N$,
$10^3$ (red, with the scale of $\log_{10}C$ on the left)
and $10^6$ (blue, with the scale on the right).
Both graphs become horizontal to the right of $K:=N$
(not shown in Fig.~\ref{fig:logC_fixedN}).
We can see that the shapes of the two graphs are very similar.
The blue line (for $10^6$) was drawn after the red one (for $10^3$),
and the latter is thicker in order to be able to see the small difference between the graphs.
When implementing the formula \eqref{eq:C} for $C$,
it is convenient to compute the number of $n_k=\lceil N/K\rceil$ in \eqref{eq:split}
as $N \bmod K$.

The vertical dotted lines in Fig.~\ref{fig:logC_fixedN}
are drawn through the observation $N/2$ (the right line)
and the nearest observation to $N/3$ (the left one).
The qualitative behaviour of the red and blue lines,
namely the strictly exponential growth
(linear on the log scale of Fig.~\ref{fig:logC_fixedN})
of both graphs between, roughly,
$N/2$ and $N$, $N/3$ and $N/2$, etc., is easy to understand.
When $K$ increases by 1 between $N/2$ and $N$,
one of the $n_k=2$ in \eqref{eq:split}
gets replaced by two $n_k$, namely 1 and 1.
Therefore, the expression for $C$ given by \eqref{eq:C}
gets multiplied by $2^2/2!=2$,
and so the slope of the red and blue lines
between $N/2$ and $N$ in Fig.~\ref{fig:logC_fixedN}
is $\log_{10}2\approx0.301$.
When $K$ increases by 1 between $N/3$ and $N/2$,
a block $(3,3)$ of two $n_k$ in \eqref{eq:split}
gets replaced by a block $(2,2,2)$ of three $n_k$.
The expression for $C$ given by \eqref{eq:C} gets multiplied by
\[
  \left(
    \frac{2!}{2^2}
  \right)^{3}
  /
  \left(
    \frac{3!}{3^3}
  \right)^2
  =
  3^4/2^5,
\]
and so the slope of the red and blue lines
between $N/3$ and $N/2$ in Fig.~\ref{fig:logC_fixedN}
is $\log_{10}(3^4/2^5)\approx0.403$.
And when $K$ increases by 1 between $N/4$ and $N/3$,
a block $(4,4,4)$ of three $n_k$ gets replaced by a block $(3,3,3,3)$ of four $n_k$.
The expression for $C$ gets multiplied by
\[
  \left(
    \frac{3!}{3^3}
  \right)^{4}
  /
  \left(
    \frac{4!}{4^4}
  \right)^3
  =
  2^{19}/3^{11},
\]
and so the slope of the red and blue lines
immediately to the left of $N/3$ in Fig.~\ref{fig:logC_fixedN}
is $\log_{10}(2^{19}/3^{11})\approx0.471$.
The slope keeps increasing as we move left.

\begin{remark}\label{rem:limit}
  Let us replace the decimal logarithms $\log_{10}$ by natural $\ln$
  in the graphs shown in Fig.~\ref{fig:logC_fixedN} for $N\in\{10^3,10^6\}$.
  This will not change the shape of the graphs and will only change
  the labels on the axes;
  the upper limit of the range of $\log C$ will now become close to $N$
  (this will be checked in the appendix).
  An interesting function is the limit $L$ of these graphs as $N\to\infty$
  with both axes rescaled by dividing by $N$
  (so that the slopes remain unchanged).
  It can be defined as the continuous piecewise-linear function $L:[0,1]\to[0,\infty)$
  satisfying $L(0):=0$ and
  \begin{equation}\label{eq:L'}
    L'(x)
    :=
    \ln
    \left(
      \left(
        \frac{n!}{n^n}
      \right)^{n+1}
      \left(
        \frac{(n+1)!}{(n+1)^{n+1}}
      \right)^{-n}
    \right)
    \text{ for all }
    x
    \in
    \left(
      \frac{1}{n+1},
      \frac{1}{n}
    \right)
  \end{equation}
  and for all $n\in\{1,2,\dots\}$.
  As $x\to0$,
  \[
    L(x)
    =
    -\frac12 x\ln x
    + \frac{\ln(2\pi)}{2} x
    + o(x),
  \]
  and the right-hand side without the ``${}+o(x)$''
  and with $\log_{10}$ in place of $\ln$
  is shown as the yellow line in Fig.~\ref{fig:logC_fixedN}.
  The final value of the approximation
  $
    -\frac12 x\ln x
    + \frac{\ln(2\pi)}{2} x
  $
  at $x=1$ is approximately 0.919,
  which is not so different from $L(1)=1$.
  We can see that the slope of $L(x)$ is infinity at $x=0$.
\end{remark}

Another interesting case for a finite $\mathbf{Z}$
is where  $K:=\left|\mathbf{Z}\right|$ is fixed
while the number of observations $N$ varies.
Applying Stirling's formula to \eqref{eq:C}
we then obtain
\begin{equation}\label{eq:CC}
  C
  =
  \Theta(N^{(K-1)/2}).
\end{equation}
This polynomial growth rate as $N\to\infty$
contrasts with the exponential growth rate for $\left|\mathbf{Z}\right|=\infty$.
The closeness of $\UXE$ and $\URE$ to within a polynomial factor
(namely, $\Theta(N^{(K-1)/2})$)
implies the closeness of e-variables
under exchangeability and randomness in the same crude sense;
it also implies the closeness of $\UXP$ and $\URP$,
which in turn implies the closeness of p-variables
under exchangeability and randomness, in the same sense.

On Kolmogorov's and Martin-L\"of's log scale, $N^{(K-1)/2}$ becomes $\frac{K-1}{2}\log N$,
and differences of $O(\log N)$ are often ignored in the algorithmic theory of randomness;
according to Kolmogorov,
``we should not be afraid of logarithms (as well as $O(1)$ terms that we have anyway)''.%
\footnote{In Russian, ``\begin{cyr}логарифмов не надо бояться, так же как и констант\end{cyr}'';
  recorded by Shen \cite[note 12]{Semenov/etal:2024-full}
  and translated by the authors of \cite[arXiv version]{Semenov/etal:2024-full}.}
In this sense exchangeability and being IID nearly coincide for finite sequences as well.
However, for $\left|\mathbf{Z}\right|=\infty$ the difference becomes $\Theta(N)$
on the logarithmic scale,
which is the largest possible in Kolmogorov's and Martin-L\"of's binary setting.

More recently the log scale for p-values has been advocated by Greenland,
who called $-\log_2 p$ the \emph{S-value}, or \emph{p-surprisal},
corresponding to a p-value of $p$;
see, e.g., \cite{Greenland:2024}.
There have been no suggestions to ignore logarithms in p-surprisals,
which were designed to be closer to statistical practice.
Indeed, from the practical point of view logarithms are important
even for binary sequences:
e.g., if we toss a coin (possibly biased) $N:=10^3$ times and get exactly half ``heads'',
we will have statistically significant evidence that the tosses are not IID,
since the largest product probability $Q^N$ of this very simple event is about $2.52\%$;
on the other hand, the sequence of outcomes may be perfectly exchangeable.
The value of the largest probability can be computed from
\begin{equation}\label{eq:binary}
  \URP(A)
  =
  \frac{N!}{(N/2)!^2}
  2^{-N}
  \le
  1
  =
  \UXP(A),
\end{equation}
where $A$ is the event of observing $N/2$ ``heads'' (assuming $N$ even).

\begin{remark}
  The case of a fixed $K:=\left|\mathbf{Z}\right|$ with variable $N$
  corresponds to the bottom left corner of Fig.~\ref{fig:logC_fixedN}.
  According to \eqref{eq:CC} (and the end of Remark~\ref{rem:limit}),
  the slope of the graph of $\log_{10}C$ in that corner
  becomes infinite ``under microscope'' as $N\to\infty$;
  namely, we expect the slope to grow as $\frac12\log_{10}N$.
  For $N=10^3$ and $N=10^6$, as used in Fig.~\ref{fig:logC_fixedN},
  this expression gives the slopes of 1.5 and 3, respectively;
  more precise values given by \eqref{eq:binary}
  are 1.598
  % 1.598169
  and 3.098,
  % 3.09806
  respectively.
\end{remark}

\begin{figure}[bt]
  \begin{center}
    \includegraphics[width=0.50\columnwidth]{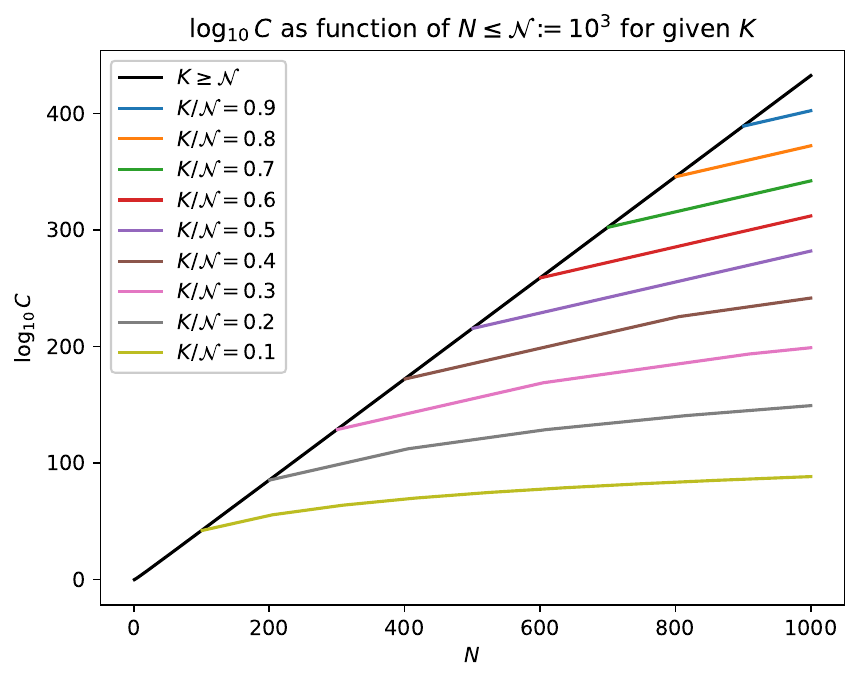}
    \includegraphics[width=0.48\columnwidth]{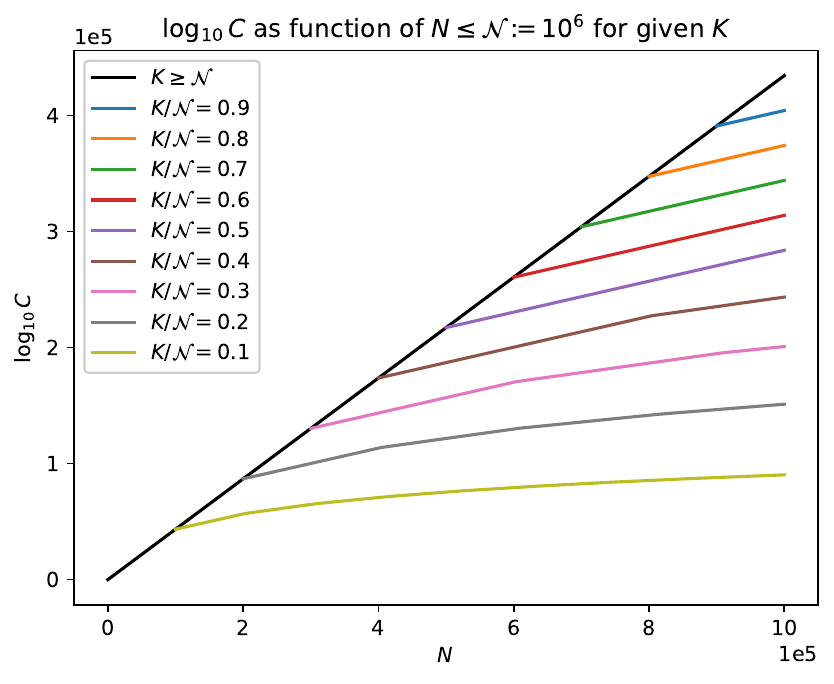}
  \end{center}
  \caption{The graphs of $\log_{10}C$
    for $\mathcal{N}=10^3$ (left panel) and $\mathcal{N}=10^6$
    (right panel, using $10^5$ as the unit for the labels on both axes),
    as described in text.}
  \label{fig:logC_fixedK}
\end{figure}

Figure~\ref{fig:logC_fixedK} complements Fig.~\ref{fig:logC_fixedN}
by plotting $\log_{10}C$ as function of $N$ for a fixed $K$ explicitly.
It shows two ranges for $N$, $N\le10^3$ in the left panel and $N\le10^6$ in the right panel;
the two panels look similar, and the description to follow is applicable to either.
Let $\NNN\in\{10^3,10^6\}$ be the upper limit of the range of $N$.
The base graph shows $\log_{10}C$ in black as function of $N$ for $K\ge\NNN$.
The remaining 9 graphs consist of two pieces, black and coloured;
the graphs and their labels in the legends are shown in the same order,
top to bottom.
Let me describe, for concreteness, the bottom one labelled as $K/\NNN=0.1$;
this description will also be applicable, \emph{mutatis mutandis},
to the other 8 graphs.
The graph corresponds to $K=0.1\,\NNN$ and consists of two parts:
the values for $N\le K$ are shown in black and the values for $N>K$ in olive.
The behaviour of the graph changes drastically after $N=K$,
which is marked by using a different colour:
the black part grows exponentially fast,
while the olive part grows only polynomially fast
(albeit for a polynomial of a high degree,
namely $\lceil(K-1)/2\rceil\ge50$ according to \eqref{eq:CC}).

Let us see why the graphs in Fig.~\ref{fig:logC_fixedK} look the way they do.
The black graph is the diagonal of the bounding rectangle in the limit $\NNN\to\infty$.
Suppose $K=\frac{k}{10}\,\NNN$, $k\in\{1,\dots,9\}$,
is one of the $K$ marked in Fig.~\ref{fig:logC_fixedK}.
If $nK<N<(n+1)K$, all $n_k$ in a balanced split \eqref{eq:split}
are either $n$ or $n+1$.
Incrementing $N$ by 1 to $N+1$ leads to replacing one of the $n_k=n$ by $n_k=n+1$.
The relative increment in the constant $C$ given by \eqref{eq:C} is
\begin{multline}\label{eq:slope}
  \left(
    \frac{(N+1)^{N+1}}{(N+1)!}
    \frac{(n+1)!}{(n+1)^{n+1}}
  \right)
  /
  \left(
    \frac{N^N}{N!}
    \frac{n!}{n^{n}}
  \right)\\
  =
  \left(
    1 + \frac1N
  \right)^N
  \left(
    1 + \frac1n
  \right)^{-n}
  \approx
  \e
  \left(
    1 + \frac1n
  \right)^{-n}
\end{multline}
(the ``$\approx$'' is justified by $N>K$
and all the $K$ marked in Fig.~\ref{fig:logC_fixedK}
being large, at least 100).

For $n=1$, the last expression in \eqref{eq:slope}
gives the slope of
$\log_{10}\e-\log_{10}2\approx0.133$
% 0.13326448623927062
between $K$ and $2K$.
This is the slope of the full coloured lines for $K=0.9\,\mathcal{N}$ (blue)
to $K=0.5\,\mathcal{N}$ (purple)
and the slope of the first straight segment of the other coloured lines
(strictly speaking, ``straight'' should be understood as ``approximately straight''
because of the ``$\approx$'' in \eqref{eq:slope}).
The slope of the following straight segment of the coloured lines
for $K=0.4\,\mathcal{N}$ (brown) to $K=0.1\,\mathcal{N}$ (olive)
is $\log_{10}\e-2\log_{10}1.5\approx0.082$.
% 0.08211196379188933
The slope of the following straight segment
for the bottom three coloured lines is
$\log_{10}\e-3\log_{10}(4/3)\approx0.059$, etc.
% 0.059478272078352046
It is clear from \eqref{eq:slope} that the slope tends to 0
when $n\to\infty$,
and we can see that the bottom line of Fig.~\ref{fig:logC_fixedK}
is close to being horizontal on the right.
If any of the coloured lines is continued to the right beyond $\mathcal{N}$,
it will consist of segments of exponentially fast growth with decreasing growth rates,
which will make the overall growth rate polynomial.

\section{Conclusion}

Being motivated by the foundations of probability and statistics,
de Finetti, Kolmogorov, and Martin-L\"of considered cases
where the assumptions of exchangeability and randomness
are close to each other.
However, there are also cases where the closeness of the two assumptions disappears,
including the important case of finite sequences of a given length
for a large observation space.

De Finetti's theorem has many fascinating generalizations and variations,
and we can ask similar questions about those.
One generalization that is especially close to the subject of this paper
concerns weighted exchangeability \cite{Barber/etal:2024},
which accounts for a known covariate shift.
Many more are provided by the theory of repetitive structures;
see, e.g., \cite{Lauritzen:1988}, \cite[Part~IV]{Vovk/etal:2022book},
and \cite[Chap.~4]{Bernardo/Smith:2000}
(the last book, however, does not use the terminology of repetitive structures).

\subsection*{Acknowledgments}

Thanks to the participants of the workshop ``Algorithmic Statistics''
(Oxford, November 28, 2025)
for a useful discussion
and to Ioannis Kontoyiannis for informing me about \cite{Johnson/etal:2025}
and other related work.

I acknowledge the use of Microsoft Copilot in exploring proof ideas,
which I reviewed carefully.
I take full responsibility for this paper's mathematical statements
and their proofs.

\appendix
\section{Derivation of $L(1)=1$}

In this appendix we check the statement made in Remark~\ref{rem:limit},
which is equivalent to $L(1)=1$.
We can simplify the expression for $L'(x)$ in \eqref{eq:L'} as
\begin{equation*}
  L'(x)
  =
  \ln(n!)
  +
  n(n+1)\ln\frac{n+1}{n}
  -
  n\ln(n+1).
\end{equation*}
Integrating $L'$ from 0 to 1 gives
\[
  L(1)
  =
  \sum_{n=1}^\infty
  \left(
    \frac{\ln(n!)}{n(n+1)}
    +
    \ln\frac{n+1}{n}
    -
    \frac{\ln(n+1)}{n+1}
  \right).
\]
We are required to show that the partial sums
\begin{equation}\label{eq:S}
  S_N
  :=
  \sum_{n=1}^N
  \frac{\ln(n!)}{n(n+1)}
  +
  \sum_{n=1}^N
  \ln\frac{n+1}{n}
  -
  \sum_{n=1}^N
  \frac{\ln(n+1)}{n+1}
\end{equation}
converge to 1 as $N\to\infty$.
By telescoping,
we can transform the first addend in \eqref{eq:S} as
\begin{multline*}
  \sum_{n=1}^N
  \frac{\ln(n!)}{n(n+1)}
  =
  \sum_{n=1}^N
  \sum_{k=1}^n
  \frac{\ln k}{n(n+1)}
  =
  \sum_{k=1}^N
  \sum_{n=k}^N
  \frac{\ln k}{n(n+1)}\\
  =
  \sum_{k=1}^N
  \ln k
  \left(
    \frac1k - \frac{1}{N+1}
  \right)
  =
  \sum_{k=1}^N
  \frac{\ln k}{k}
  -
  \frac{1}{N+1}
  \sum_{k=1}^N
  \ln k
\end{multline*}
(this uses $\frac{1}{n(n+1)}=\frac{1}{n}-\frac{1}{n+1}$
in the third equality)
and the second addend as
\[
  \sum_{n=1}^N
  \ln\frac{n+1}{n}
  =
  \ln(N+1).
\]
Plugging this into \eqref{eq:S} gives
\begin{align*}
  S_N
  &:=
  \sum_{k=1}^N
  \frac{\ln k}{k}
  -
  \frac{1}{N+1}
  \sum_{k=1}^N
  \ln k
  +
  \ln(N+1)
  -
  \sum_{n=1}^N
  \frac{\ln(n+1)}{n+1}\\
  &=
  -\frac{\ln(N+1)}{N+1}
  -
  \frac{\ln(N!)}{N+1}
  +
  \ln(N+1)
  \sim
  1,
\end{align*}
where the ``$=$'' is obtained
by combining the first and last addends in the preceding expression
and the definition of $N!$,
and the ``$\sim$'' is obtained from Stirling's formula in the crude form
\(\ln(N!) = N\ln N - N + O(\ln N)\).
\end{document}